\documentclass[12pt]{amsart}
\usepackage{graphicx}
\usepackage{amsmath}
\usepackage{amsfonts}
\usepackage[symbol]{footmisc}%

\newtheorem{thm}{Theorem}[section]

\newtheorem{lem}[thm]{Lemma}
\newtheorem{prop}[thm]{Proposition}
\theoremstyle{definition}
\newtheorem{defn}[thm]{Definition}
\theoremstyle{remark}
\newtheorem{rem}[thm]{Remark}
\numberwithin{equation}{section}
\newcommand{\Real}{\mathbb R}
\newcommand{\Nat}{\mathbb N}

\begin{document}

\title[]{A simplified proof of the o-minimal Whitney Extension Theorem}%
\author{Beata Kocel-Cynk, Wies{\l}aw Paw{\l}ucki and Anna Valette}%

\address{(B. Kocel-Cynk) Instytut Matematyki Politechniki Krakowskiej, ul. Warszawska 24, 31-155 Kraków, Poland}%
\email{beata.kocel-cynk{@}pk.edu.pl}%

\address{(W. Pawłucki) Instytut Matematyki Uniwersytetu Jagiellońskiego, ul. Prof. St. Łojasiewicza 6, 30-048 Kraków, Poland}%
\email{wieslaw.pawlucki{@}im.uj.edu.pl}%

\address{(A. Valette) Katedra Teorii Optymalizacji i Sterowania Uniwersytetu Jagiellońskiego, ul. Prof. St. Łojasiewicza 6, 30-048 Kraków, Poland}%
\email{anna.valette{@}uj.edu.pl}%

\subjclass{Primary 14P15. Secondary 03C64, 32B20, 46E35}%
\keywords{O-minimality, Whitney extension theorem, $\varLambda_p$-regular mapping, $\varLambda_p$-regular cell}%

\date{9 February, 2026
$\phantom{......................................................................................................................}$
Research financed by the Narodowe Centrum Nauki (Poland); grant no. 2024/53/B/ST1/01147 (W. Pawłucki) and grant no. 2021/43/B/ST1/02359 (A. Valette)}
% ----------------------------------------------------------------
\begin{abstract} We give a proof of the o-minimal version of the Whitney Extension Theorem simplified as compared to the original ones. A new simplifying ingredient is a definable variant of Urysohn's lemma for class $\mathcal{C}^q$ (see Section 3).
\end{abstract}
\maketitle
% ----------------------------------------------------------------
\section{Introduction}

In this article we give a simplified proof of the following o-minimal version of the classical Whitney Extension Theorem [W], earlier proved in [KP1], [KP2] and in [T] (see also [PR]):

\medskip
\begin{thm} \, Given an o-minimal extension of the field of real numbers $\Real$, \, let $E$ be a definable closed subset of $\Real^n$ and let $p$ and $q$ be positive integers such that $p\leq q$. Let
$$
F(x, X) = \sum_{|\alpha|\leq p}\frac{1}{\alpha!}F^{\alpha}(x)X^{\alpha} \qquad (X = (X_1,\dots, X_n))
$$
be a definable $\mathcal{C}^p$-Whitney field on $E$. $($Definability of $F$ means that all $F^{\alpha}$ are definable functions.$)$

\medskip
Then there exists a definable $\mathcal{C}^p$-function $f: \Real^n\longrightarrow \Real$, \, of class $\mathcal{C}^q$ on $\Real^n \setminus E$ such that $D^{\alpha}f =F^{\alpha}$ on $E$ whenever $\alpha\in \mathbb{N}^n$, $|\alpha|\leq p$.
\end{thm}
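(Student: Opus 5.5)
I would prove the theorem by induction on the dimension of $E$, following the general scheme of [KP1], [KP2]. The tools are $\varLambda_p$-regular cell decompositions and the definable $\mathcal{C}^q$ Urysohn lemma announced in the abstract (Section 3).

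\emph{Inductive setup.} Since $E$ is closed and definable, I decompose $\Real^n$ into finitely many definable cells, compatible with $E$. After a linear change of coordinates, each cell of dimension $\dim E$ contained in $E$ is a $\varLambda_p$-regular cell, i.e. the graph of a $\varLambda_p$-regular map over an open cell in some $\Real^k$. The lower-dimensional part $E' \subset E$ (the union of the cells of smaller dimension) is closed and definable with $\dim E' < \dim E$. Restricting $F$ to $E'$ gives a definable $\mathcal{C}^p$-Whitney field there, so by induction there is a definable $\mathcal{C}^p$ function $g$ on $\Real^n$, of class $\mathcal{C}^q$ off $E'$, realizing $F|E'$. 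Replacing $F$ by $F - T^p g|E$, I may assume $F$ is flat on $E'$, i.e. $F^{\alpha}=0$ on $E'$ for all $|\alpha|\le p$. The task is then to extend a field flat on $E'$ across the top-dimensional regular cells $C$ of $E$. Each such field is small near $\partial C$ in the Whitney sense, since flatness plus the Whitney remainder conditions give $|F^{\alpha}(x)| = o(d(x,E')^{p-|\alpha|})$.

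\emph{Local extension on one cell.} On a $\varLambda_p$-regular cell $C=\{(u,\varphi(u)) : u\in D\}$, I straighten by the map $\Phi(u,v)=(u,v+\varphi(u))$. The $\varLambda_p$ bounds $|D^{\beta}\varphi|\le M\,d(u,\partial D)^{1-|\beta|}$ keep derivatives controlled. I then define the extension by the usual Taylor formula in the normal variables,
$$
f_C(u,\varphi(u)+v)=\sum_{|\gamma|\le p}\frac{1}{\gamma!}\,G^{\gamma}(u)\,v^{\gamma},
$$
where the $G^{\gamma}$ are obtained from $F$ by the chain rule. Since the $G^{\gamma}$ are only $\mathcal{C}^p$-compatible, I first smooth them to class $\mathcal{C}^q$ on the open cell $D$ by a definable smoothing that preserves the flat estimates. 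I then multiply by a definable cutoff $\chi_C$, equal to $1$ near $C$ and supported in a horn neighbourhood $\{|v|< c\, d(u,\partial D)\}$, so that the pieces coming from different cells do not interact and everything vanishes near $E'$ to order $p$.

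\emph{Main obstacle.} I expect the main difficulty to be producing the cutoff $\chi_C$. It must be definable and of class $\mathcal{C}^q$ off $E$, equal to $1$ on a neighbourhood of $C$ and $0$ near the other cells of $E$. Its derivatives must satisfy $|D^{\beta}\chi_C|\lesssim d(x,E')^{-|\beta|}$, so that the product with the flat Taylor polynomial stays $\mathcal{C}^p$ across $E'$. Here I would invoke the definable $\mathcal{C}^q$ Urysohn lemma of Section 3: for disjoint closed definable sets $A,B$ in the open set $\Real^n\setminus E'$ it gives a definable $\mathcal{C}^q$ function equal to $1$ near $A$ and $0$ near $B$, with derivative bounds in terms of the distances. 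With such bounds, the product estimate is a Leibniz-rule computation using the flatness $o(d(x,E')^{p-|\alpha|})$.

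Summing the contributions, $f = g + \sum_C \chi_C f_C$ is definable and of class $\mathcal{C}^q$ off $E$. It is $\mathcal{C}^p$ on $\Real^n$ with the prescribed jet on $E$, which I would check separately at points of the regular cells and at points of $E'$ via the flat estimates.
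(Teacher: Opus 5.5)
Two steps of your plan fail as stated.

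\textbf{The smoothing of the coefficients.} On $C$ your $f_C$ restricts to $G^0=F^0|C$, and its whole $p$-jet along $C$ is fixed by the $G^\gamma$. If you replace them by smoothed $\mathcal{C}^q$ functions, you lose the interpolation $D^\alpha f=F^\alpha$ on $C$. An exact $\mathcal{C}^q$ replacement is impossible when $F^0|C$ is merely $\mathcal{C}^p$. Keeping the original $G^\gamma$ does not help either. They are only $\mathcal{C}^{p-|\gamma|}$ (Remark 4.5), so $\sum_\gamma G^\gamma(u)v^\gamma/\gamma!$ need not even be $\mathcal{C}^p$ (think of $g(u)+h(u)v$ with $h$ merely continuous). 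The classical cure, regularizing $G^\gamma$ at scale $|v|$ by convolution, does not preserve definability. A definable smoothing that keeps the jet on $C$ and is $\mathcal{C}^q$ off $C$ is essentially the theorem itself for one cell.

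The paper needs no smoothing. By Theorem 2.7 there is a closed definable $A\subset S$ with $\dim A<k$ such that $S\setminus A$ is a finite union of $k$-dimensional $\varLambda_q$-regular cells $\varLambda_j$ on which all the $F^{(0,\beta)}$ are already $\mathcal{C}^q$. The induction hypothesis and Lemma 6.1 make $F$ flat on $A$. Then the plain formula $f(u,w)=\sum_{|\beta|\le p}\frac{1}{\beta!}F^{(0,\beta)}(u,\varphi(u))(w-\varphi(u))^\beta$ is $\mathcal{C}^q$ on $D\times\Real^{n-k}$ with exactly the prescribed jet. This also forces $\varLambda_q$-regular rather than $\varLambda_p$-regular cells; otherwise $\varphi$, hence $f_C$, is only $\mathcal{C}^p$ off $E$.

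\textbf{Flatness measured against the wrong set.} You flatten $F$ only on the lower-dimensional part $E'$ and treat all top cells at once. Your cutoffs are to equal $1$ near $C$, $0$ near the other top cells, and satisfy $|D^\beta\chi_C|\le K\,d(x,E')^{-|\beta|}$. Such cutoffs need not exist. Take $C=\{(t,0):0<t<1\}$ and $C'=\{(t,t^2):0<t<1\}$, tangent at $0\in E'$; note that your horn $\{|v|<c\,d(u,\partial D)\}$ contains $C'$ near $0$ for every $c>0$. On the vertical segment of length $t^2$ joining them, $\chi_C$ drops from $1$ to $0$. So somewhere on it $|D\chi_C|\,d(x,E')\ge t^{-2}\cdot t\to\infty$. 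With the true, steeper derivatives, your Leibniz estimate based on $o(d(x,E')^{p-|\alpha|})$ does not close.

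The missing idea is to measure flatness against the whole complement of the cell being treated. The paper reduces to $E=Z\cup S$, with $S$ a single $k$-dimensional cell open in $E$ and $F$ flat on all of $Z=E\setminus S$, including the other top cells. This uses the induction hypothesis plus Lemma 6.1, inducting also on the number of $k$-dimensional strata. The Whitney condition then gives $F^\beta=o(d(\cdot,Z)^{p-|\beta|})$ on $S$.

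Theorem 3.2 is applied with $W=\varLambda_j$ and $Z_j:=E\setminus\varLambda_j$, giving $|D^\alpha\omega|\le C\,d(x,Z_j)^{-|\alpha|}$. This matches the estimates $D^\kappa f=o(d(x,Z_j)^{p-|\kappa|})$ on $\{d(x,\varLambda_j)\le C\,d(x,Z_j)\}$, obtained from Proposition 4.7 applied to $\varPhi$ and $\varPhi^{-1}$, and Lemma 4.8 finishes.

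Because the $\varLambda_j$ all lie in one Lipschitz graph $S$, one has $d(a,A\cup Z)\le M'|a-b|$ for $a\in\varLambda_j$, $b\in\varLambda_i$, $i\ne j$. This is what makes each $F_j$ a Whitney field. Distinct top cells of a general $E$, as in the example above, have no such property.
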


\bigskip
As in [KP1], [KP2] and [T], the main tool of the proof are stratifications into $\varLambda_p$-regular cells and the extension procedure when $E$ is a $\varLambda_p$-regular cell. A simplifying ingredient of the present proof is  a definable $\mathcal{C}^p$-version of Urysohn lemma, which appeared in our paper [K-CPV2] (see Section 3).

\bigskip
Throughout the article, $p$ and $q$ will denote positive integers, $p\leq q$.

\bigskip
\section{$\varLambda_p$-regular maps and stratifications into $\varLambda_p$-regular cells}

We recall after [KP1] (see also [KP2], [Th] and [PR]), the definition of $\varLambda_p$-regular maps.

\medskip
\begin{defn} A  map $f:\varOmega\longrightarrow \Real^m$ defined on an open subset $\varOmega$ of $\Real^n$ is called \emph{$\varLambda_p$-regular} if it is of class $\mathcal{C}^p$ and

$$
\forall\, \alpha\in \mathbb{N}^n \, x\in \varOmega: \, 1\leq |\alpha|\leq q \, \Longrightarrow \, |D^{\alpha}f(x)|\leq \frac{C}{d(x,\partial \varOmega)^{|\alpha|-1}},
$$

\noindent
where $C$ is a positive constant.

In this definition, we adopt the convention $d(x,\emptyset):= 1$, to cover the case $\partial\varOmega = \emptyset$.
\end{defn}

\medskip
\begin{rem} In particular, all the first order partial derivatives $\frac{\partial f}{\partial x_j}$ are bounded. It follows that if $A$ is a  closed subset of $\varOmega$ which is \emph{quasi-convex} (i.e. there exists a positive constant $M$ such that for each pair of points $a, b\in A$ there exists a rectifiable arc $\gamma: [0, 1]\longrightarrow A$ such that $\gamma(0)=a$, $\gamma(1)=b$ and $|\gamma|\leq M|a - b|$), then $f|A$ is Lipschitz, hence it admits a continuous extension $\overline{f|A}: \overline{A}\longrightarrow \Real^m$ to the closure $\overline{A}$.
\end{rem}

\medskip
\begin{defn}
$\phantom{.......................................................................................................................}$

\medskip
 We say that $S$ is an \emph{open $\varLambda_p$-regular cell} in $\Real^n$ if
\begin{equation}
\text{$S$ is any open interval in $\Real$, when $n = 1$;}
\end{equation}
\begin{equation}
S = \{(x', x_n): x'\in T, \psi_1(x') < x_n < \psi_2(x')\},
\end{equation}
where $x' = (x_1,\dots, x_{n-1})$, $T$ is an open $\varLambda_p$-regular cell in $\Real^{n-1}$ and every $\psi_i$ \, $(i\in \{1, 2\})$ \, is either a definable $\varLambda_p$-regular function on $T$ with values in $\Real$, or identically equal to $-\infty$, or identically equal to $+\infty$, and $\psi_1(x') < \psi_2(x')$, for each $x'\in T$, when $n > 1$.

\medskip
Extending the above definition, we say that $S$ is an \emph{$m$-dimensional $\varLambda_p$-regular cell} in $\Real^n$, where $m\in \{0,\dots, n-1\}$, if
\begin{equation}
S = \{(u, w): \, u\in T, w = \varphi(u)\},
\end{equation}
where $u = (x_1,\dots, x_m)$, $w = (x_{m+1},\dots, x_n)$, \, $T$ is an open $\varLambda_p$-regular cell in $\Real^m$, and $\varphi: T\longrightarrow \Real^{n-m}$ is a definable $\varLambda_p$-regular mapping.
\end{defn}

\medskip
\begin{rem}
It is elementary that if $M\geq 0$ is a Lipschitz constant of the mapping $\varphi$, then putting $L := 1/\sqrt{1 + M^2}$, we have

\begin{equation}
\forall x = (u, w)\in T\times \Real^{n-m}: \, L|w - \varphi(u)| \leq d(x, S) \leq |w - \varphi(u)|
\end{equation}

and

\begin{equation}
\forall \, x\in \Real^n\setminus (T\times \Real^{n-m}): \, d(x, S) \geq L d(x, \partial S).
\end{equation}
\end{rem}

\medskip
\begin{rem} It follows from Remark 2.2 by induction that every $\varLambda_p$-regular cell is quasi-convex and it is Lipschitz in the sense that each of the functions $\psi_i$ in (2.2), if finite, as well as the map $\varphi$ in (2.3), are Lipschitz. Besides, every $\varLambda_p$-regular cell in $\Real^n$ is a definable connected $\mathcal C^p$-submanifold of $\Real^n$.
\end{rem}

\medskip
\begin{defn}

\medskip
Let us recall that a (\emph{definable}) $\mathcal C^p$-\emph{stratification} of a (definable) subset $E$ of $\Real^n$ is a finite decomposition $\mathcal S$ of $E$ into (definable) connected $\mathcal C^p$-submanifolds of $\Real^n$, called \emph{strata}, such that for each stratum $S\in \mathcal S$, its \emph{boundary} in $E$; i.e. $\partial_ES := (\overline{S}\setminus S)\cap E$ is the union of some strata of dimensions $< \dim S$. If $A_1,\dots, A_k$ \, $(k\in \Nat)$ \, are subsets of $E$, we call a stratification $\mathcal S$ \emph{compatible with the subsets $A_1,\dots, A_k$}, if each $A_j$ is a union of some strata.
\end{defn}

\begin{thm}[\lbrack P1, Proposition 4 or P2, Theorem 3\rbrack]
Given any finite number $A_1,\dots, A_k$ of definable subsets of a definable subset $E$ of $\Real^n$, there exists a $\mathcal C^p$-stratification $\mathcal S$ of $E$ compatible with sets $A_1,\dots, A_k$ and such that each stratum $S\in \mathcal S$, after an orthogonal linear change of coordinates in $\Real^n$, is a $\varLambda_p$-regular cell in $\Real^n$.
\end{thm}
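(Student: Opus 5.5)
The proof goes by induction on $n$, combining the usual $\mathcal C^p$ cell decomposition with a device that makes cells $\varLambda_p$-regular after a suitable orthogonal rotation. The central analytic input is the following \emph{$\varLambda_p$-regularity lemma}. Let $f: U\to\Real$ be a definable $\mathcal C^p$ function on an open definable $U\subset\Real^{n-1}$, and suppose its first derivatives are bounded by some constant $M$. Then there are a closed definable nowhere dense set $Z\subset U$ and a constant $C$ such that on each connected component $\varOmega$ of $U\setminus Z$ (after refinement into cells) one has $|D^\alpha f(x)|\le C\, d(x,\partial\varOmega)^{1-|\alpha|}$ for $2\le|\alpha|\le p$.

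To prove the lemma I will use o-minimal growth estimates. Consider a derivative $g=D^{\alpha}f$ with $|\alpha|\ge 1$, and restrict it to a segment from $x$ of length comparable to $d(x,\partial\varOmega)$. The mean value theorem, together with a definable choice of directions, bounds $D^{\alpha+e_j}f$ in terms of $\sup|D^\alpha f|/d$. This works outside a "bad" set, where the derivative oscillates too much. Definability makes that bad set of lower dimension. This is the step where the Łojasiewicz-type argument of [P1] (or the Kurdyka–Parusiński argument) is needed: one shows, by a definable curve selection argument, that the ratio $|D^{\alpha+e_j}f(x)|\,d(x,\partial\varOmega)/\sup_{B}|D^\alpha f|$ is bounded off a nowhere dense set. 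One then proceeds inductively on $|\alpha|$, shrinking the cells each time.

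The main induction runs as follows.

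\begin{itemize}
\item[(a)] First I take a definable $\mathcal C^p$ cell decomposition of $\Real^n$ compatible with $E, A_1,\dots,A_k$, satisfying the frontier condition, so that its cells inside $E$ form a stratification.

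\item[(b)] For each cell of dimension $m$, presented as a graph of a map over an open cell in some $m$ coordinates, I will make the graph map Lipschitz (first derivatives bounded). To do this I apply the standard "L-regular decomposition" argument. By the o-minimal existence of finitely many directions, for each point of a cell the tangent space avoids a fixed finite set of "bad" projections. One can therefore choose, from a finite definable family of orthogonal coordinate systems, one in which the tangent planes make an angle bounded away from $\pi/2$ with $\Real^m\times 0$. Partitioning the cell according to which coordinate system works yields Lipschitz graph pieces. This is the Kurdyka/Parusiński preparation, which I will cite.

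\item[(c)] Inductively on $n$, I treat the base $T\subset\Real^{m}$ and the boundaries $\psi_1,\psi_2$ of open cells. I apply the regularity lemma to the graph map $\varphi$ and to the functions $\psi_i$, then refine the base cells by the induction hypothesis applied to the sets $Z$ and to the projections of the other data. Since $d(u,\partial T)$ controls the distance for each refined base cell, the bounds transfer to the refined cells.
\end{itemize}

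Finally, I need the result to be a stratification and not just a partition. Refinements may destroy the frontier condition, so I will redo the decomposition compatibly with the closures of the lower-dimensional pieces. This is done by descending induction on dimension: at each stage the newly created bad sets have strictly lower dimension, so the process terminates after finitely many steps. The main obstacle is the regularity lemma itself: obtaining the bound $d^{1-|\alpha|}$ uniformly up to the boundary, with the exceptional set only nowhere dense. If a direct proof fails, I will instead simply invoke [P1, Proposition 4] and [P2, Theorem 3], as the statement cites.
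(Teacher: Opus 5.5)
The paper does not prove this theorem. It quotes it from [P1, Proposition 4] and [P2, Theorem 3], so your closing fallback is exactly the paper's route. The direct argument you sketch first, however, has genuine gaps, and the hard part is not where you locate it.

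The substance of the statement is the Lipschitz part, and your step (b) does not deliver it. Partitioning according to which coordinate system makes the tangent planes uniformly transverse to the fibres gives only a uniform bound on $D\varphi$. That yields a Lipschitz $\varphi$ only over a quasi-convex base (cf.\ Remark 2.2), and the pieces of such a partition need not even be cells in the new coordinates. For open cells the criterion is vacuous, since the tangent space is all of $\Real^n$. What is needed is one coordinate system in which both walls $\psi_1,\psi_2$ and, recursively, all functions describing the base are Lipschitz. Without this, your regularity lemma in (c) cannot even be applied to the $\psi_i$, because it presupposes bounded first derivatives. This is precisely the content of Kurdyka's L-regular decomposition and its o-minimal versions (cf.\ [P2]). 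Citing it is legitimate, but then the core of the theorem is cited rather than proved.

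Conversely, the lemma you call the main obstacle is the easy step, though your mechanism for it fails. The mean value theorem bounds increments of $D^\alpha f$ by its derivative, not the reverse. Curve selection gives at best pointwise finiteness of your ratio, not a uniform $C$. A working argument is one-variable:
\begin{itemize}
\item Let $Z$ be the union of the bad sets from lower orders and the closure of the set of $x$ near which $t\mapsto D^{\alpha+e_j}f(x+te_j)$ is neither constant nor strictly monotone without zeros. By the monotonicity theorem the latter set meets each line parallel to $e_j$ in finitely many points, so $Z$ is nowhere dense.
\item Put $d=d(x,Z\cup\partial U)$ and integrate over the half of $[x-\tfrac d2e_j,\,x+\tfrac d2e_j]$ on which $|D^{\alpha+e_j}f|\geq|D^{\alpha+e_j}f(x)|$. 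This gives $|D^{\alpha+e_j}f(x)|\leq\tfrac4d\sup_{\overline{B}(x,d/2)}|D^\alpha f|$, which iterates in $|\alpha|$.
\end{itemize}

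Finally, in (c) the bounds transfer only to the \emph{open} cells $T'$ of the refinement of $T$, because $d(u,\partial T')\leq d(u,\partial T)$. The lower-dimensional cells of the refinement carry new strata: the graph of $\varphi$, or the band between $\psi_1$ and $\psi_2$, over such a cell. This includes cells inside $Z$, where you have no higher-order control at all. Nothing transfers to these strata. Even off $Z$, a point of a lower-dimensional subcell may be much closer to $\partial T$ than to the subcell's own frontier; for example, the point $(t,t^2)$ of $\{x_2=x_1^2\}$ in $(0,1)^2$ is at distance about $t^2$ from $\partial T$ but about $t$ from the curve's endpoint. You must restrict, note that first derivatives stay bounded, reapply the lemma over the smaller base, and recurse on dimension. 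Your last paragraph invokes such a recursion only for the frontier condition.
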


\begin{rem} By [P2], as orthogonal linear changes of coordinates in the above theorem it suffices to take permutations of Cartesian coordinates.
\end{rem}

\bigskip
\section{Definable $\mathcal{C}^q$-Urysohn lemma}

\medskip
The definable $\mathcal{C}^q$-Urysohn lemma (Theorem 3.2) is a basic new ingredient in our proof of Theorem 1.2.

\medskip
\begin{defn}
$\phantom{.............................................................................................................................................}$

\medskip
Let $Z$ be a closed definable subset of $\Real^n$ and let $W$ be a definable closed subset of $\Real^n\setminus Z$. We will consider the following open neighborhoods of $W$ in $\Real^n$

$$
G_\eta(W, Z) :=\{x\in \Real^n\setminus Z: \, d(x, W) < \eta d(x, Z)\},
$$
where $\eta > 0$.
\end{defn}

\medskip
\begin{thm}[\lbrack K-CPV, Proposition 3.9 \rbrack] Let $Z$ be a definable closed subset of $\Real^n$ and let $W$ be a definable closed subset of $\Real^n\setminus Z$. Then for any positive integer $q$ and for any $\eta > 0$ there exists a definable function  $\omega: \Real^n\setminus Z\longrightarrow [0, 1]$ such that

\begin{equation}
\forall \, x\in \Real^n\setminus Z, \, \alpha\in \mathbb{N}^n: \quad |\alpha|\leq q \Longrightarrow |D^\alpha \omega(x)|\leq \frac{C}{d(x, Z)^{|\alpha|}},
\end{equation}
\begin{equation}
\text{$\omega \equiv 1$ on $G_\rho(W, Z)$, for some $\rho\in (0, \eta)$, and}
\end{equation}
\begin{equation}
\text{support of $\psi$ in $\Real^n\setminus Z$ is contained in $G_\eta(W, Z)$.}
\end{equation}
\end{thm}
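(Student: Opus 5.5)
The plan is to build $\omega$ as the composition of a fixed one–variable smooth cutoff with a definable function that is \emph{regularized} and \emph{homogeneous of degree zero} with respect to $d(\cdot,Z)$. The natural candidate is the ratio $d(x,W)/d(x,Z)$. Plugged into a cutoff $\chi$ with $\chi=1$ on $(-\infty,\rho']$ and $\chi=0$ on $[\eta',\infty)$, it would give exactly properties (3.2) and (3.3). The trouble is that distance functions are only Lipschitz, not $\mathcal C^q$. So the first step is to replace $d(\cdot,Z)$ and $d(\cdot,W)$ by definable $\mathcal C^q$ functions on $\Real^n\setminus Z$ that are equivalent to them and have the right derivative bounds.

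\textbf{Step 1 (regularized distance to $Z$).} I would find a definable $\mathcal C^q$ function $\delta$ on $\Real^n\setminus Z$ such that
\[
c\,d(x,Z)\le \delta(x)\le C\,d(x,Z), \qquad |D^\alpha\delta(x)|\le C\,d(x,Z)^{1-|\alpha|}\quad (|\alpha|\le q).
\]
This is a definable version of Whitney's regularized distance. I would obtain it from Theorem 2.6, applied to the graph of $d(\cdot,Z)$ after a stratification into $\varLambda_q$-regular cells. On each cell the derivative bounds of Definition 2.1 hold, since the boundary of the cell is controlled by $Z$. The local pieces would then be glued by a finite definable partition of unity built from the same cells.

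\textbf{Step 2 (regularized ratio).} Next I would construct a definable $\mathcal C^q$ function $\theta$ on $\Real^n\setminus Z$ that is comparable to $d(x,W)/d(x,Z)$ in the following threshold sense: $\theta\le a$ on $G_\rho(W,Z)$ and $\theta\ge b>a$ outside $G_\eta(W,Z)$. It must also satisfy $|D^\alpha\theta|\le C\,d(x,Z)^{-|\alpha|}$.

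The natural approach is to apply Step 1 with $Z$ replaced by $W\cup Z$. This gives $\delta'\sim d(\cdot,W\cup Z)$, with derivative bounds of order $d(x,W\cup Z)^{1-|\alpha|}$. These bounds are too weak near $W$, since there $d(x,W\cup Z)$ can be much smaller than $d(x,Z)$.

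To fix this, I would put $\theta=\lambda(\delta'/\delta)$, where $\lambda$ is smooth, increasing, and constant for small arguments. Then $\theta$ is constant where $d(x,W)\ll d(x,Z)$, which removes exactly the region where the bounds fail. On the rest of the domain, where $d(x,W)\gtrsim\rho\, d(x,Z)$, we have $d(x,W\cup Z)\sim d(x,Z)$ with constants depending on $\rho$. The Leibniz and Faà di Bruno formulas then give the bound $C d(x,Z)^{-|\alpha|}$.

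\textbf{Step 3.} Set $\omega=\chi\circ\theta$ with a $\mathcal C^\infty$ cutoff $\chi$ taking values in $[0,1]$, which is definable by taking $\chi$ piecewise polynomial of class $\mathcal C^q$ instead of $\mathcal C^\infty$. The bound (3.1) then follows from the chain rule. The support condition holds after choosing the thresholds compatibly with the equivalence constants from Steps 1 and 2, and shrinking $\rho$ if necessary.

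The main obstacle is Step 1: producing a \emph{definable} regularized distance with the sharp derivative bounds. Whitney's classical construction uses a countable cube decomposition and is not definable. The replacement has to come from the $\varLambda_p$-regular stratification of Theorem 2.6 together with a careful argument that the gluing preserves the bounds $d^{1-|\alpha|}$. I would try first to stratify $\Real^n\setminus Z$ so that on each cell $d(\cdot,Z)$ is comparable to the distance to the cell's boundary relevant to $Z$.
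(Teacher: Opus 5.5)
\textbf{Verdict: genuine gap.} Your Steps 2 and 3 are a sound reduction, but Step 1, on which everything rests, is neither proved nor provable by the route you sketch.

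\textbf{Steps 2 and 3.} Since $W$ is closed in $\Real^n\setminus Z$, the set $W\cup Z$ is closed in $\Real^n$. Suppose $\delta\sim d(\cdot,Z)$ and $\delta'\sim d(\cdot,W\cup Z)$ are definable regularized distances. Then $\delta'/\delta$ is comparable to $\min\{d(x,W)/d(x,Z),1\}$ and tends to $0$ at points of $W$. Wherever it exceeds a fixed threshold, $d(x,W\cup Z)$ is comparable to $d(x,Z)$. So $\omega=\chi(\delta'/\delta)$, with a semialgebraic $\mathcal C^q$ cutoff $\chi$ equal to $1$ near $0$, satisfies (3.1)--(3.3). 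Your auxiliary $\lambda$ is superfluous. Just let $\chi$ vanish slightly before the level corresponding to $\eta$, so that the closure of $\{\omega\neq 0\}$ in $\Real^n\setminus Z$ stays inside $G_\eta(W,Z)$.

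\textbf{Where the gap is.} The paper does not prove Theorem 3.2 itself; it quotes it from [K-CPV]. As its title says, that paper is devoted to a Calder\'on--Zygmunt-type regularization of the distance function, which is exactly your Step 1. That regularization is the whole difficulty, not a short consequence of Theorem 2.6.

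Your sketch for Step 1 fails on two counts.

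First, $\varLambda_q$-regularity on a cell $\Omega$ only gives $|D^\alpha f|\le C\,d(x,\partial\Omega)^{1-|\alpha|}$. The walls of a stratification of $\Real^n\setminus Z$ contain points at positive distance from $Z$, near which $d(x,\partial\Omega)\ll d(x,Z)$. So the claim that ``the boundary of the cell is controlled by $Z$'' is false. In fact $d_Z:=d(\cdot,Z)$ itself violates the required bounds away from $Z$. For $Z=\{x_2=x_1^2\}\subset\Real^2$ one finds $\partial^2 d_Z/\partial x_1^2\,(0,t)=-2/(1-2t)$ for $0<t<1/2$. This tends to $-\infty$ as $t\nearrow 1/2$, while $d_Z(0,t)=t\to 1/2$; the point $(0,1/2)$ is the focal point of the vertex. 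So restricting $d_Z$ to cells can never yield $|D^\alpha\delta|\le C\,d(x,Z)^{1-|\alpha|}$. Genuinely new local models are needed, and you do not say how to build them.

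Second, gluing local pieces needs a finite definable partition of unity $\{\theta_j\}$ on $\Real^n\setminus Z$ with $|D^\alpha\theta_j|\le C\,d(x,Z)^{-|\alpha|}$. That is, its transition zones must have width comparable to $d(x,Z)$ around sets that accumulate at $Z$. Such cutoffs are precisely what Theorem 3.2 asserts, so as written the argument is circular.

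\textbf{How to close the gap.} Either invoke the definable regularized-distance theorem explicitly as an external result, checking that its proof does not itself go through the Urysohn lemma you are deriving. Or give an independent definable construction of $\delta$ with the stated bounds.
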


\bigskip
\section{$\mathcal{C}^p$-Whitney fields}

\medskip
Let $A$ be a locally closed subset of $\Real^n$. By $\mathcal{C}(A)$ we denote the algebra of continuous functions on $A$ with values in $\Real$. A  \emph{$\mathcal{C}^p$-Whitney field} on $A$ is a polynomial

$$
F(u, X) = \sum_{|\alpha|\leq p}\frac{1}{\alpha!} F^{\alpha}(u)X^{\alpha}\in \mathcal{C}(A)[X] = \mathcal{C}(A)[X_1,\dots,X_n],
$$

which fulfills the following condition

\begin{enumerate}
\item[($\ast$)] {\, for each $c\in A$ and for each $\beta\in \mathbb{N}^n$ such that $|\beta|\leq p$

$$
F^{\beta}(a)- \sum_{|\alpha|\leq p-|\beta|} \frac{1}{\alpha!} F^{\alpha + \beta}(b)(a - b)^{\alpha} = o(|a - b|^{p-|\beta|}), \quad\text{when $A\ni a\rightarrow c$, $A\ni b\rightarrow c$}.
$$}
\end{enumerate}

We call such a $\mathcal{C}^p$-Whitney field \emph{definable}, if $A$ is definable and all the functions $F^{\alpha}$ are definable.

\bigskip
\begin{rem} If $F$ is a $\mathcal{C}^p$-Whitney field on a locally closed subset $A\subset \Real^n$, then its \emph{restriction} $F|B$ to a locally closed subset $B\subset A$, defined by
$F|B(u, X) = F(u, X)$, for any $u\in B$, is a $\mathcal{C}^p$-Whitney field on $B$.
\end{rem}

\medskip
Let $\pi_p: \mathcal{C}(A)[X]\longrightarrow \mathcal{C}(A)[X]$ denote the natural projection

$$
\pi_p(\sum_{\alpha} \frac{1}{\alpha!}F^{\alpha}X^{\alpha}):= \sum_{|\alpha|\leq p} \frac{1}{\alpha!}F^{\alpha}X^{\alpha}
$$
onto the space of polynomials of degree $\leq p$.

\medskip
\begin{rem} The set $\mathcal{E}^p(A)$ of all $\mathcal{C}^p$-Whitney fields on a locally closed subset $A\subset \mathbb{R}^n$ with its natural addition and the multiplication defined by $FG := \pi_p(FG)$ is an $\Real$-algebra.
\end{rem}

\bigskip
\begin{rem}It is also natural to define the composition of $\mathcal{C}^p$-Whitney fields as follows.\newline Let $F_1,\dots,F_m\in \mathcal{E}^p(A)$, where $A\subset \mathbb{R}^n$ is locally closed and let $H\in \mathcal{E}^p(B)$, where $B\subset \mathbb{R}^m$ is locally closed. Assume that $(F_1^0,\dots,F_m^0)(A)\subset B$. Put $F:=(F_1,\dots,F_m)$. Then the composition $H\circ F$ defined by the formula

$$
(H\circ F)(u, X) := \pi_p[ H(F_1^0(u),\dots, F_m^0(u), F_1(u, X)-F_1^0(u),\dots,F_m(u, X)-F_m^0(u))],
$$
for any $u\in A$, is a $\mathcal{C}^p$-Whitney field.
\end{rem}

\medskip
\begin{rem} It follows from the Taylor formula that if $f: U\longrightarrow \Real$ is a (definable) $\mathcal{C}^p$-function on a (definable) open subset $U \subset \Real^n$, then the polynomial

$$
Tf(u, X)= T_u^pf(X):= \sum_{|\alpha|\leq p}\frac{1}{\alpha!}D^{\alpha}f(u)X^{\alpha},\quad\text{where $u\in U$},
$$
is a (definable) $\mathcal{C}^p$-Whitney field on $\varOmega$.
\end{rem}

\bigskip
\begin{rem} ([G, pages 87-88]) Let $k$ and $n$ be integers such that $1\leq k\leq n$. Let $D$ be an open subset of $\Real^k$ treated as a subset of $\Real^n$ by the injection $\Real^k\ni v\longmapsto (v, 0)\in \Real^k\times \Real^{n-k}=\Real^n$. Then every $\mathcal{C}^p$-Whitney field

$$
F(v, X) = F(v, V, W) = \sum_{|\alpha| + |\beta|\leq p}\frac{1}{\alpha!\beta!}F^{(\alpha,\beta)}(v)V^\alpha W^\beta
$$
on $D$, where $\alpha\in \mathbb{N}^k$, $\beta\in \mathbb{N}^{n-k}$, $V = (X_1,\dots,X_k)$ and $W = (X_{k+1},\dots,X_n)$, can be identified with a polynomial

$$
\tilde{F}(v, W)= \sum_{|\beta|\leq p}\frac{1}{\beta!}F^{(0,\beta)}(v)W^\beta\in \mathcal{C}(\varOmega)[W],
$$
where, for each $\beta\in \mathbb{N}^{n-k}$ such that $|\beta|\leq p$, $F^{(0,\beta)}$ is a $\mathcal{C}^{p-|\beta|}$-function on $D$ such that

\begin{enumerate}
\item[($\ast\ast$)]{\qquad ${D^{\alpha}}F^{(0,\beta)} = F^{(\alpha,\beta)}$, \quad for each $\alpha\in \mathbb{N}^k$ such that $|\alpha|\leq p - |\beta|$.}
\end{enumerate}
\end{rem}

\medskip
\begin{defn}
Two closed subsets of $\Real^m$ are called \emph{simply separated} if there exists a positive constant $M$ such that, for each $x\in A$, $d(x, A\cap B)\leq Md(x, B)$ (compare [P1]).
\end{defn}

\bigskip
The following proposition plays especially important role in our extension procedure, similarly as in the previous papers.

\bigskip
\begin{prop}[\lbrack KP1, Proposition 3 or KP2, Proposition 3\rbrack] Let $\varPhi:\varOmega\longrightarrow {\mathbb R}^n$ be a $\varLambda_p$-regular map defined on an open subset $\varOmega\subset {\mathbb R}^m$ and let $A$ be a closed quasi-convex subset of $\varOmega$\footnote{Then by Remark 2.2, $\varPhi|A$ is Lipschitz, hence has a continuous extension $\overline{\varPhi|A}$ to the closure $\overline{A}$.} such that $\overline{A}$ and $\partial\varOmega$ are simply separated. Let $B$ be a locally closed subset of $\Real^n$ such that $\varPhi(A)\subset B$ and let $r: B\longrightarrow [0, +\infty)$ be a  function such that

$$
r(y)\leq C'd(y, \overline{\varPhi|A}(\overline{A}\setminus A)) \qquad\text{for any $y\in B$},
$$
where $C'$ is a positive constant. Let

$$
F(y, Y) = \sum_{|\kappa|\leq p}\frac{1}{\kappa!} F^{\kappa}(y)Y^{\kappa}
$$
be a $\mathcal{C}^p$-Whitney field on $B$ such that, for each $b\in \overline{\varPhi|A}(\overline{A}\setminus A)$, $F^\kappa(y) = o(r(y)^{p-|\kappa|})$,
when $B\ni y\rightarrow b$ and $|\kappa|\leq p$. Put

$$
G(x, X):= F\circ T\varPhi(x, X)= \sum_{|\sigma|\leq p}\frac{1}{\sigma!}G^\sigma(x)X^\sigma, \quad\text{where \, $x\in A$ and $X = (X_1,\dots,X_m)$}.
$$

\medskip
Then, for each $a\in \overline{A}\setminus A$, \, $G^\sigma(x) = o(r(\varPhi(x))^{p-|\sigma|}$, when $A\ni x\rightarrow a$ and $|\sigma|\leq p$.
\end{prop}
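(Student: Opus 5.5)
We expand the composition $G = F\circ T\varPhi$ explicitly. By the Faà di Bruno formula (Remark 4.4), each coefficient $G^\sigma(x)$, for $|\sigma|\leq p$, is a finite sum of terms
$$
c\, F^{\kappa}(\varPhi(x)) \prod_{j=1}^{|\kappa|} D^{\gamma_j}\varPhi_{i_j}(x),
$$
where $|\kappa|\leq |\sigma|$, $|\gamma_j|\geq 1$ and $\sum_j |\gamma_j| = |\sigma|$. The plan is to bound each such term separately. The factor $F^\kappa(\varPhi(x))$ is controlled by the hypothesis on $F$. The derivatives of $\varPhi$ are controlled by $\varLambda_p$-regularity:
$$
|D^{\gamma_j}\varPhi(x)|\leq C\, d(x,\partial\varOmega)^{1-|\gamma_j|}.
$$
Multiplying these bounds, the product of derivatives is at most $C\, d(x,\partial\varOmega)^{|\kappa|-|\sigma|}$.

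To compare $d(x,\partial\varOmega)$ with $r(\varPhi(x))$ we proceed as follows.

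First, since $\varPhi$ is Lipschitz on the quasi-convex set $A$ (Remark 2.2), for $x\in A$ we get
$$
d\bigl(\varPhi(x), \overline{\varPhi|A}(\overline{A}\setminus A)\bigr)\leq L'\, d(x, \overline{A}\setminus A).
$$
This uses the Lipschitz extension to $\overline A$ and the fact that every point of $\overline A\setminus A$ is mapped into that image set. Combined with the assumption on $r$, this gives $r(\varPhi(x))\leq C'L'\, d(x,\overline A\setminus A)$.

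Second, $\overline A\setminus A\subset \overline A\cap\partial\varOmega$ because $A$ is closed in $\varOmega$. Simple separation then gives, for $x\in\overline A$,
$$
d(x,\overline A\setminus A)\leq d(x,\overline A\cap\partial\varOmega)\leq M\, d(x,\partial\varOmega).
$$
Hence $r(\varPhi(x))\leq K\, d(x,\partial\varOmega)$ on $A$ for some constant $K>0$.

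Now fix $a\in\overline A\setminus A$ and let $A\ni x\to a$. Then $\varPhi(x)\to b:=\overline{\varPhi|A}(a)$, which lies in $\overline{\varPhi|A}(\overline A\setminus A)$, and $\varPhi(x)\in B$. So for any $\varepsilon>0$ and $x$ close to $a$,
$$
|F^\kappa(\varPhi(x))|\leq \varepsilon\, r(\varPhi(x))^{p-|\kappa|}.
$$
The typical term is therefore bounded by
$$
\varepsilon\, C\, r(\varPhi(x))^{p-|\kappa|}\, d(x,\partial\varOmega)^{|\kappa|-|\sigma|}.
$$
When $r(\varPhi(x))>0$ we rewrite this as
$$
\varepsilon\, C\, r(\varPhi(x))^{p-|\sigma|}\left(\frac{r(\varPhi(x))}{d(x,\partial\varOmega)}\right)^{|\sigma|-|\kappa|}.
$$
Since $|\sigma|\geq|\kappa|$, the last factor is at most $K^{|\sigma|-|\kappa|}$. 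Summing the finitely many terms gives $G^\sigma(x)=o(r(\varPhi(x))^{p-|\sigma|})$.

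When $r(\varPhi(x))=0$, the $o$-hypothesis forces $F^\kappa(\varPhi(x))=0$ for $x$ near $a$, so $G^\sigma(x)=0$ and the claim holds trivially. In the case $|\kappa|=0$ (which occurs only for $\sigma=0$), no derivatives of $\varPhi$ appear and the bound is immediate.

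The main obstacle is the inequality $r(\varPhi(x))\lesssim d(x,\partial\varOmega)$. This is exactly where quasi-convexity, which makes $\varPhi|A$ Lipschitz so that distances in the image are controlled by distances in the source, and simple separation of $\overline A$ and $\partial\varOmega$ are both needed. The estimate is used in the direction that allows the negative powers of $d(x,\partial\varOmega)$ coming from higher derivatives of $\varPhi$ to be absorbed by the spare powers of $r$.
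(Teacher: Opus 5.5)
Your argument is correct and is essentially the standard proof from [KP1]/[KP2], which this paper only cites without reproving. That proof expands $G^\sigma$ by the chain rule and absorbs the negative powers of $d(x,\partial\varOmega)$ using $r(\varPhi(x))\le K\,d(x,\partial\varOmega)$ on $A$, which follows from the Lipschitz property of $\varPhi|A$ together with simple separation, exactly as you do. There is one harmless slip: when $r(\varPhi(x))=0$ and $|\kappa|=|\sigma|=p$, the hypothesis gives only $|F^\kappa(\varPhi(x))|\le\varepsilon$, not $0$, but these terms contain only first derivatives of $\varPhi$, so you still get $|G^\sigma(x)|\le C\varepsilon=o(r(\varPhi(x))^0)$.
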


\medskip
\begin{lem} [\lbrack KP1, Lemma 6 or KP2, Lemma 3\rbrack] Let $\varOmega$ be an open subset of $\Real^n$, $a\in \overline{\varOmega}$ and $r:\varOmega\longrightarrow [0, +\infty)$.
Let $g, h: \varOmega\longrightarrow R$ be two $\mathcal{C}^p$-functions such that $D^{\kappa}g(x) = o(r(x)^{p-|\kappa|})$ and $D^{\kappa}h(x) = O(r(x)^{-|\kappa|})$, when $x\rightarrow a$, for any $\kappa\in \mathbb{N}^n$ such that $|\kappa|\leq p$.

\medskip
Then $D^{\kappa}(gh)(x) = o(r(x)^{p-|\kappa|})$, when $x\rightarrow a$, for any $\kappa\in \mathbb{N}^n$ such that $|\kappa|\leq p$.
\end{lem}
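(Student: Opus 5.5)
The plan is to expand the derivative of the product by the Leibniz formula. For $|\kappa|\leq p$,
$$
D^{\kappa}(gh)(x) = \sum_{\lambda\leq\kappa}\binom{\kappa}{\lambda} D^{\lambda}g(x)\, D^{\kappa-\lambda}h(x),
$$
which is valid because both $g$ and $h$ are of class $\mathcal{C}^p$ and $|\kappa|\leq p$. Since the sum is finite, it suffices to show that each summand is $o(r(x)^{p-|\kappa|})$ as $x\rightarrow a$.

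For a fixed multi-index $\lambda\leq\kappa$, I combine the two hypotheses. We have $D^{\lambda}g(x)=o(r(x)^{p-|\lambda|})$, and since $|\kappa-\lambda|\leq p$ the second hypothesis also gives $D^{\kappa-\lambda}h(x)=O(r(x)^{-|\kappa-\lambda|})$. Multiplying, the summand is $o(r(x)^{p-|\lambda|-|\kappa|+|\lambda|}) = o(r(x)^{p-|\kappa|})$, because $o(\cdot)\cdot O(\cdot)=o(\cdot\,\cdot)$.

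The one point needing care is the meaning of these symbols where $r(x)=0$ or where the exponent is negative. I would read $\varphi = o(r^s)$ as $|\varphi(x)|\leq \varepsilon(x) r(x)^s$ with $\varepsilon(x)\to 0$, and $\psi=O(r^{-t})$ as $|\psi(x)|\leq K r(x)^{-t}$ near $a$. At points with $r(x)=0$ the $O$-hypothesis with $t>0$ is vacuous, or forces the convention $r^{-t}=+\infty$; in that case the product bound $|D^\lambda g\,D^{\kappa-\lambda}h|\leq K\varepsilon(x) r(x)^{p-|\kappa|}$ should be interpreted consistently with the same convention used in the statement. The exponent $p-|\kappa|$ is nonnegative and the powers combine multiplicatively, so the bound follows pointwise for $r(x)>0$, and with the natural conventions in general.

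Summing finitely many such terms, with the binomial coefficients absorbed into the constant, gives the claim. I expect no real obstacle. The only step to check is this bookkeeping of exponents and conventions, which works precisely because the $O$-bound on $h$ loses exactly $|\kappa-\lambda|$ powers of $r$, while the $o$-bound on $g$ retains $p-|\lambda|$ of them.
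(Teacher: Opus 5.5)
Your proposal is correct and is exactly the paper's argument: the paper's proof is the single line ``Immediately from the Leibniz formula,'' and your term-by-term estimate $o(r^{p-|\lambda|})\cdot O(r^{-|\kappa-\lambda|})=o(r^{p-|\kappa|})$ is what that line abbreviates. Your hedging about points where $r(x)=0$ is unnecessary. Near $a$ the hypothesis on $g$ forces $D^{\lambda}g(x)=0$ at such points whenever $|\lambda|<p$, so every summand with $\lambda\neq\kappa$ vanishes there, and the remaining summand $D^{\kappa}g\cdot h$ only uses $h=O(1)$.
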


\begin{proof} Immediately from the Leibniz formula.
\end{proof}

\medskip
\section{The generic case: Whitney field flat except on one $\varLambda_q$-regular cell}

\bigskip
In this section we consider a special case, when our $\mathcal{C}^p$-Whitney field $F$ is zero except on one $\varLambda_q$-regular cell which is open in the set $E$.

\begin{prop} Let $\varLambda =\{x = (u,w)\in D\times \Real^{n-k}: \, w=\varphi(u)\}$ be a $k$-dimensional $\varLambda_q$-regular cell open in $E$, where $D$ is an open ${\varLambda}_q$-regular cell in $\Real^k$ and $\varphi=(\varphi_{k+1},\dots,\varphi_n): D\longrightarrow \Real^{n-k}$ is a $\varLambda_q$-regular map.

Let $F$ be a definable $\mathcal{C}^p$-Whitney field on $E$ which is flat $($i.e. zero$)$ on $Z:= E\setminus \varLambda$ and such that for each $\beta\in \mathbb{N}^{n-k}$, with $|\beta|\leq p$,  the restriction $F^{(0,\beta)}|\varLambda$ is of class $\mathcal{C}^q$.

\medskip
Then there exists a definable open neighborhood $V$ of $\varLambda$ such that \newline $V\subset D\times \Real^{n-k}\setminus Z$ and a definable $\mathcal{C}^q$-function $\omega: V\longrightarrow [0, 1]$ such that
$\omega\equiv 1$ in a neighborhood of $\varLambda$, ${\rm supp}_{D\times \Real^{n-k}}\omega\subset V$,

$$
\forall \, x\in V\setminus Z, \, \alpha\in \mathbb{N}^n: \quad |\alpha|\leq q \Longrightarrow |D^\alpha \omega(x)|\leq \frac{C}{d(x, Z)^{|\alpha|}}
$$
where $C$ is a positive constant, and the function

$$
f(x)=f(u, w):= \sum_{|\beta|\leq p}\frac{1}{\beta!}F^{(0,\beta)}(u,\varphi(u))(w - \varphi(u))^{\beta} = F(\pi(x), x- \pi(x)),
$$

\noindent
defined for $x=(u, w)\in D\times \Real^{n-k}$, where $\pi(x)= \pi(u, w):= (u, \varphi(u))$,  is of class $\mathcal{C}^q$ in $D\times \Real^{n-k}$ and the function $f(x)\omega(x)$ extends to a $\mathcal{C}^p$-function on $\Real^n$ which is a $\mathcal{C}^p$-extension of the Whitney field $F$ and which is of class $\mathcal{C}^q$ on $\Real^n\setminus \partial\varLambda$.
\end{prop}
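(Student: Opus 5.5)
We take the cutoff $\omega$ from Theorem 3.2 with $W:=\overline{\varLambda}\cap(\Real^n\setminus Z)$, and then control the product $f\omega$ near $Z$ with Proposition 4.8 and Lemma 4.9.

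\textbf{Regularity of $f$.} Since $\varphi$ is $\mathcal{C}^q$ on $D$ and each $F^{(0,\beta)}\circ(\mathrm{id},\varphi)$ is $\mathcal{C}^q$ on $D$ by hypothesis, $f$ is a finite sum of products of $\mathcal{C}^q$ functions of $u$ with polynomials in $w-\varphi(u)$. Hence $f$ is $\mathcal{C}^q$ on $D\times\Real^{n-k}$.

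\textbf{Construction of $\omega$ and $V$.} Because $\varLambda$ is open in $E$, it is closed in $\Real^n\setminus Z$, so $W=\varLambda$. We apply Theorem 3.2 with a small $\eta$ and put $V:=G_\eta(\varLambda,Z)$. We must check $V\subset D\times\Real^{n-k}$ for $\eta$ small enough. For this we use (2.5): points outside $D\times\Real^{n-k}$ satisfy $d(x,\varLambda)\ge L\,d(x,\partial\varLambda)$. Also $\partial\varLambda\subset Z\cup(\overline{\varLambda}\setminus E)$, and $\overline{\varLambda}\setminus\varLambda\subset E$ since $E$ is closed, so $\partial\varLambda\subset Z$. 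Together with $d(x,Z)\le d(x,\partial\varLambda)$, this shows $G_\eta$ misses the complement once $\eta<L$. The estimates on $D^\alpha\omega$, the fact that $\omega\equiv1$ on $G_\rho$, and the support condition all come from Theorem 3.2. Note that ${\rm supp}_{D\times\Real^{n-k}}\omega\subset V$ is the statement that the support relative to $\Real^n\setminus Z$ lies in $G_\eta$.

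\textbf{The extension property.} Set $g:=f\omega$ on $V$ and $g:=0$ elsewhere. Then $g$ is $\mathcal{C}^q$ off $Z$, and off $\overline{V}$ it vanishes identically. On $\varLambda$ we have $T g=Tf$, and $T_x f$ restricted to $\varLambda$ equals $F(x,\cdot)$ by Remark 4.7 and $(\ast\ast)$, since $f$ is the Taylor polynomial of $F$ transported along the normal direction $w$. It remains to show that, for $a\in Z$, $D^\kappa g(x)=o(d(x,Z)^{p-|\kappa|})$ as $x\to a$ with $x\in V$. This gives $g\in\mathcal{C}^p$ with $T g=0=F$ on $Z$; on $\partial\varLambda$ one then has only $\mathcal{C}^p$, while $g$ is $\mathcal{C}^q$ elsewhere.

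By Lemma 4.9 with $r(x)=d(x,Z)$, and since $|D^\alpha\omega|\le C\,d(x,Z)^{-|\alpha|}$, it suffices to prove $D^\kappa f(x)=o(d(x,Z)^{p-|\kappa|})$ on $V$ near $a$. To do this, write $f=H\circ\Phi$ with $\Phi(u,w)=(u,\varphi(u),w-\varphi(u))$, which is a $\varLambda_q$-regular map on $D\times\Real^{n-k}$. Here $H$ is the Whitney field $((u,\varphi(u)),v)\mapsto \sum_\beta F^{(0,\beta)}(u,\varphi(u))v^\beta/\beta!$ on $\varLambda\times\Real^{n-k}$.

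We then apply Proposition 4.8 on $A=$ a suitable quasi-convex closed piece, namely $\{x\in D\times\Real^{n-k}: |w-\varphi(u)|\le \eta' d(\pi(x),\partial\varLambda)\}$, which contains $V$ by (2.4). The flatness of $F$ at points of $\partial\varLambda\subset Z$ gives $F^{\kappa}(y)=o(d(y,\partial\varLambda)^{p-|\kappa|})$ along $\varLambda$ by the Whitney condition $(\ast)$ with $b\in Z$. On $V$, the quantities $d(x,Z)$, $d(\pi(x),\partial\varLambda)$ and $d(x,\partial\varLambda)$ are comparable, using (2.4), the Lipschitz property of $\varphi$, and $|w-\varphi(u)|\le C\eta\, d(x,Z)$.

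\textbf{Main obstacle.} The hard part is verifying the hypotheses of Proposition 4.8, namely quasi-convexity of $A$ and simple separation of $\overline{A}$ from $\partial\varOmega$, and establishing this comparability of distances. If that becomes awkward, the fallback is a direct Leibniz/chain-rule estimate. The key inputs there are $|D^\alpha\varphi|\le C\,d(u,\partial D)^{1-|\alpha|}$ and $|w-\varphi(u)|^{|\beta|}\lesssim d^{|\beta|}$, and we would check that each term is $o(d^{p-|\kappa|})$.
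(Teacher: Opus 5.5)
Your set-up agrees with the paper's. You take $\omega$ from Theorem 3.2 with $W=\varLambda$, choose $\eta<L$ via (2.5) so that $V=G_\eta(\varLambda,Z)\subset D\times\Real^{n-k}$, and use the Leibniz lemma to reduce to the estimate $D^\kappa f(x)=o(d(x,Z)^{p-|\kappa|})$ on $V$ near $\partial\varLambda$. (For $Tf=F$ on $\varLambda$ you should first flatten $\varLambda$ by $(u,w)\mapsto(u,w+\varphi(u))$ before using $(\ast\ast)$, as the paper does; that part is routine.) The genuine gap is that estimate, and it has nothing to do with quasi-convexity or simple separation. Put $G^\beta(u):=F^{(0,\beta)}(u,\varphi(u))$. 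Your $H(y,v)=\sum_\beta F^{(0,\beta)}(y)v^\beta/\beta!$ is not a $\mathcal{C}^p$-Whitney field built from the components $F^\kappa$. At points with $v\neq0$, its jet of order $p$ contains the tangential derivatives $D^\alpha G^\beta$ for all $|\alpha|\leq p$, so $|\alpha|+|\beta|$ ranges up to $2p$. The field $F$ only encodes these derivatives when $|\alpha|+|\beta|\leq p$. The others exist only because of the $\mathcal{C}^q$ hypothesis, and flatness of $F$ on $Z$ says nothing about them. Hence the hypothesis $H^\kappa=o(r^{p-|\kappa|})$ of Proposition 4.7 cannot be obtained from $(\ast)$. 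Your Leibniz fallback fails at the same terms $D^\alpha G^\beta(u)\,(w-\varphi(u))^{\beta-\gamma}$. The factor $|w-\varphi(u)|^{|\beta|-|\gamma|}\leq C\,d^{|\beta|-|\gamma|}$ is useless without a bound $D^\alpha G^\beta=o(d^{p-|\alpha|-|\beta|})$ for $|\alpha|+|\beta|>p$.

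In fact no argument can give this estimate under the stated hypotheses. Take $n=3$, $k=2$, $p=1$, $D=(0,1)^2$, $\varphi=0$ and $E=\overline{D}\times\{0\}$. Let all components of $F$ vanish except $F^{(0,0,1)}=g(u)=u_1\theta(u_1)h\big((u_2-\frac12)/u_1^2\big)$. Here $h(s)=(1-s^2)^{q+1}$ for $|s|<1$ and $h(s)=0$ otherwise, and $\theta$ is a semialgebraic $\mathcal{C}^q$ cut-off equal to $1$ near $0$ and to $0$ on $[\frac12,\infty)$. This is a semialgebraic $\mathcal{C}^1$-Whitney field, flat on $Z=\partial D\times\{0\}$, with $F^{(0,\beta)}|\varLambda$ of class $\mathcal{C}^q$, and $f=g(u)w$. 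Let $\omega$ be as in the statement, with constant $C$. By the mean value theorem in $w$, $\omega\geq\frac12$ at $x_t=(t,\frac12+\frac{t^2}{2},\frac{t}{2C})$. For small $t$ this gives $|\partial_{u_2}(f\omega)(x_t)|\geq|h'(\frac12)|/(4C)-\frac{t}{2}$, whereas $\partial_{u_2}(f\omega)=0$ on $\varLambda$. So $f\omega$ is not $\mathcal{C}^1$ at $(0,\frac12,0)\in Z$, and the proposition as stated fails. The paper's proof makes the same leap in the sentence ``It follows that \dots''. There $D^{(\alpha,\gamma)}\tilde g=\sum_{\beta\geq\gamma}D^\alpha G^\beta\,w^{\beta-\gamma}/(\beta-\gamma)!$ involves exactly these uncontrolled derivatives. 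Both arguments need an extra input, for instance $D^\alpha G^\beta(u)=o\big(d((u,\varphi(u)),Z)^{p-|\alpha|-|\beta|}\big)$ for all $|\alpha|,|\beta|\leq p$. This does not follow from the present hypotheses and would have to be secured when the cells $\varLambda_j$ are chosen in Section 6.
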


\medskip
\noindent
\begin{proof} Consider the following $\varLambda_q$-regular automorphism of the open set $D\times \mathbb{R}^{n-k}$:

$$
\varPhi: D\times \mathbb{R}^{n-k}\ni (u, w)\longmapsto (u, w + \varphi(u))\in D\times \mathbb{R}^{n-k}.
$$

\medskip
\noindent
Now let us shift our Whitney field $F|\varLambda$ by $\varPhi$ getting the following $\mathcal{C}^p$-Whitney field on $D\times\{0\}$:

$$
G((u, 0), (U, W)):=(F|E)\circ T\varPhi((u, 0), (U, W)) =
$$
$$
\sum_{|\alpha|+|\beta|\leq p}\frac{1}{\alpha!\beta!}F^{(\alpha,\beta)}(u,\varphi(u))U^{\alpha}(W + \sum_{1\leq |\gamma|\leq p}\frac{1}{\gamma!}D^{\gamma}\varphi(u)U^{\gamma})^{\beta} \, {\rm mod}(U,W)^{p+1},
$$

\medskip
\noindent
where $\alpha, \gamma\in \mathbb{N}^k$ and $\beta\in \mathbb{N}^{n-k}$.

\medskip
By Remark 4.3, Remark 4.4, Remark 4.5 and our assumption that the functions $F^{(0,\beta)}$ are of class $\mathcal{C}^q$, this field extends to the following $\mathcal{C}^q$-function on $D\times \Real^{n-k}$

$$
\tilde{g}(x) = \tilde{g}(u, w) =\sum_{|\beta|\leq p}\frac{1}{\beta!}F^{(0,\beta)}(u,\varphi(u))w^{\beta}= \sum_{|\beta|\leq p}\frac{1}{\beta!}G^{\beta}(u)w^{\beta},
$$

\medskip
\noindent
which is $\mathcal{C}^p$-extension of $G$ to $D\times \mathbb{R}^{n-k}$. By Proposition 4.7, if $(\alpha, \beta)\in \mathbb{N}^k\times \mathbb{N}^{n-k}$ such that $|\alpha|+|\beta|\leq p$, then

$$
\forall \, a\in \partial D: \quad D^{\alpha}G^{\beta}(u) = o((d((u, \varphi(u)), Z))^{p-|\alpha|-|\beta|}) = o((d((u, 0), {\overline{\varPhi}}^{-1} (Z)))^{p-|\alpha|-|\beta|}),
$$
$\phantom{...............................................................................................}$when $u\longrightarrow a$.

\medskip
\noindent
It follows that given any positive constant $C$, then

$$
\forall \, a\in \partial D: \, D^{(\alpha,\beta)}\tilde{g}(u, w ) = o((d((u, w), {\overline{\varPhi}}^{-1} (Z)))^{p-|\alpha|-|\beta|}),\phantom{....................}
$$
$\phantom{...............................}$when $u\longrightarrow a$ and $|w| = d((u, w), D\times {0})\leq Cd((u, w), {\overline{\varPhi}}^{-1} (Z)$),

\bigskip
\noindent
Applying again Proposition 4.7, this time to the $\varLambda_q$-map \, $\varPhi^{-1}$ and to (the $\mathcal{C}^p$-Whitney field defined by) the function $\tilde{g}$ restricted to the set of the form

$$
\{(u, w)\in D\times \mathbb{R}^{n-k}: \, |w| < Cd((u, w), {\overline{\varPhi}}^{-1}(Z))\},
$$

we conclude that the function

$$
f(x) = f(u, w):= \sum_{|\beta|\leq p}\frac{1}{\beta!}G^{\beta}(u)(w - \varphi(u))^{\beta} = \sum_{|\beta|\leq p}\frac{1}{\beta!}F^{(0, \beta)}(u, \varphi(u))(w - \varphi(u))^{\beta}
$$

\medskip
\noindent
is a $\mathcal{C}^q$-extension of the $\mathcal{C}^p$-Whitney field $F|\varLambda$ to the set $D\times \mathbb{R}^{n-k}$ such that for any positive constant $C$, if $\kappa\in \mathbb{N}^n$ such that $|\kappa|\leq p$, then

$$
\forall (a,\overline{\varphi}(a))\in \partial E: \, D^{\kappa}f(x) = o((d(x, Z))^{p-|\kappa|}),\phantom{...........................}
$$
$\phantom{...........................................}$ when $x\rightarrow (a,\overline{\varphi}(a))$ and $d(x, \varLambda)\leq Cd(x, Z)$.

\bigskip
\noindent
Now, by Theorem 3.2 and Remark 2.4, there exists a definable $\mathcal{C}^q$-function $\omega: \mathbb{R}^n\setminus Z\longrightarrow [0, 1]$ such that for some constants $\rho, \eta\in (0, C)$ such that $\rho < \eta$

\medskip
$$
\forall \, x\in \mathbb{R}^n\setminus Z, \, \alpha\in \mathbb{N}^n: \quad |\alpha|\leq p \Longrightarrow |D^\alpha \omega(x)|\leq \frac{C}{d(x, Z)^{|\alpha|}},
$$

\medskip
$$
\forall \, x\in \mathbb{R}^n: \,\, d(x, \varLambda) \leq \eta d(x, Z) \Longrightarrow x\in D\times \mathbb{R}^{n-k},
$$

\medskip
$$
{\rm supp}_{{\mathbb{R}}^n\setminus Z}\omega\subset \{x\in {\mathbb{R}}^n: \,\, d(x, \varLambda) < \eta d(x, Z) \}
$$

and

$$
\forall \, x\in \mathbb{R}^n: \,\,  d(x, \varLambda) < \rho d(x, Z) \Longrightarrow \omega(x) = 1.
$$

\bigskip
Lemma 4.8 concludes the proof of Proposition 5.1.
\end{proof}

\bigskip
\section{Proof of Theorem 1.1}

\medskip
We argue by induction on $\dim E$. If $\dim E = 0$, then $E$ is a finite set and the proof reduces to gluing together a finite number of polynomials by a definable $\mathcal{C}^q$-partition of unity. Hence, assume that $\dim E = k >0$. The following lemma is an easy observation.

\begin{lem} If $E'\subset E$ is a closed subset of $E$, $g:\Real^n\longrightarrow \Real$ is a $\mathcal{C}^p$-extension of $F|E'$ which is $\mathcal{C}^q$ on $\Real^n\setminus E'$ and $h:\Real^n\longrightarrow \Real$ is a $\mathcal{C}^p$-extension of $F - Tg|E$ which is $\mathcal{C}^q$ on $\Real^n\setminus E$, then $h + g$ is  a $\mathcal{C}^p$-extension of $F$ which is $\mathcal{C}^q$ on $\Real^n\setminus E$.
\end{lem}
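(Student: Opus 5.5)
The plan is to check the two requirements, $\mathcal{C}^p$-extension of $F$ and $\mathcal{C}^q$-regularity off $E$, separately; both follow from linearity of the Taylor map $T$ together with the fact that $Tg|E$ is a $\mathcal{C}^p$-Whitney field on $E$.

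\emph{Well-definedness.} Since $g$ is a $\mathcal{C}^p$-function on $\Real^n$, Remark 4.5 shows that $Tg$ is a (definable, if $g$ is) $\mathcal{C}^p$-Whitney field on $\Real^n$. By Remark 4.2 its restriction $Tg|E$ is a $\mathcal{C}^p$-Whitney field on $E$. Remark 4.3 says $\mathcal{E}^p(E)$ is a vector space, so $F - Tg|E$ is a $\mathcal{C}^p$-Whitney field on $E$. Hence the hypothesis on $h$ makes sense.

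\emph{Extension property.} Write $f := h + g$, which is of class $\mathcal{C}^p$ on $\Real^n$. For $x\in E$ and $|\alpha|\leq p$ we have
$$
D^\alpha f(x) = D^\alpha h(x) + D^\alpha g(x) = \bigl(F^\alpha(x) - D^\alpha g(x)\bigr) + D^\alpha g(x) = F^\alpha(x).
$$
Equivalently, $Tf|E = Th|E + Tg|E = F$. Note that this step uses only that $g$ is $\mathcal{C}^p$ on all of $\Real^n$; the hypothesis that $g$ extends $F|E'$ is not needed here. In the intended application that hypothesis guarantees that $F - Tg|E$ is flat on $E'$, which is what allows the induction to proceed.

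\emph{Regularity.} Because $E'\subset E$, we have $\Real^n\setminus E\subset \Real^n\setminus E'$, so $g$ is $\mathcal{C}^q$ on $\Real^n\setminus E$. By hypothesis $h$ is also $\mathcal{C}^q$ there, and therefore so is $h+g$. Definability is preserved under sums.

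No step is a real obstacle. The only point requiring care is the first one, namely that $Tg|E$ is a genuine $\mathcal{C}^p$-Whitney field on $E$, so that the hypothesis on $h$ is meaningful; this is exactly Remarks 4.5 and 4.2.
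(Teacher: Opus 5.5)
Your proof is correct and is exactly the routine verification the paper has in mind: it calls the lemma ``an easy observation'' and gives no written proof, and the argument is linearity of $T$ on $E$ together with $\Real^n\setminus E\subset\Real^n\setminus E'$. Your remark that the hypothesis $Tg|E'=F|E'$ is not needed for the conclusion, and serves only to make $F-Tg|E$ flat on $E'$ for the induction, is also accurate.
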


Combining Theorem 2.7 with the induction hypothesis and the above lemma, we see that without any loss of generality we can assume that $E = Z\cup S$, where $Z\cap S = \emptyset$, $S$ is a $\varLambda_q$-regular cell of dimension $k$ (perhaps after some permutation of coordinates $x_1,\dots, x_n$) open in $E$ (so $Z$ is closed) and $F$ is flat on $Z$. Again by Theorem 2.7, there exists a definable closed subset $A$ of $S$ such that $\dim A < k$, \, $S\setminus A = \varLambda_1\cup\dots\cup\varLambda_r$, where $r\in\mathbb{N}^{\ast}$ and all $\varLambda_j$ are $\varLambda_q$-regular cells of dimension $k$ in the same system of coordinates and all the restrictions $F^{(0,\beta)}|\varLambda_j$ \, $(\beta\in \mathbb{N}^n, \, |\beta|\leq p)$ \, are of class $\mathcal{C}^q$.
By the induction hypothesis applied to $F|A$ and Lemma 6.1, we can assume that $F|A$ is flat. Let $F_j$ denote the $\mathcal{C}^p$-Whitney field on $E$ which is equal to $F$ on $\varLambda_j$ and zero on $E\setminus \varLambda_j$, for each $j\in\{1,\dots, r\}$. Applying Proposition 5.1 to each of $F_j$ we get a $\mathcal{C}^p$-extension $f_j\omega_j:\Real^n\longrightarrow \Real$ which is of class $\mathcal{C}^q$ outside $\overline{\varLambda_j}$. Clearly, $\sum_{j=1}^rf_j\omega_j$ is a required extension of $F$.

\section{Final remarks}
By some obvious modifications, the above proof can be generalized to the case, when the definable $\mathcal{C}^p$-Whitney field $F$ is defined not on a closed subset of $\Real^n$ but on a definable set $E$ which is locally closed; i.e. closed in some definable open subset $\varOmega$ of $\Real^n$. Then there exists a definable $\mathcal{C}^p$-extension $f:\varOmega\longrightarrow \Real$ of the field $F$ which is of class $\mathcal{C}^q$ on $\varOmega\setminus E$.

\medskip
All the results of this article can be easily modified to be true when the field of real numbers $\Real$ is replaced by any real closed field.
\bigskip
\section{Statements and Declarations}

\medskip
On behalf of the authors, the corresponding author states that there is no conflict of interest.

\medskip
Data sharing not applicable to this article as no datasets were generated or analysed during the current study.

\bigskip
\centerline{\bf References}

\bigskip

\medskip
[K-CPV] \, B. Kocel-Cynk, W. Pawłucki and A. Valette; \emph{Semialgebraic Calder{\'o}n-Zygmunt Theorem on regularization of the distance function}; Math. Ann. {\bf 390 no. 2} (2024), 1863-1883.

\medskip
[G] \, G. Glaeser; \emph{{\'E}tude de quelques alg{\'e}bres tayloriennes}, J. Anal. Math. {\bf 6} (1958), 1-124.

\medskip
[KP1] \, K. Kurdyka and W. Pawłucki; \emph{Subanalytic version of Whitney's extension theorem}; Studia Math. {\bf 124} (1997), 269-280.

\medskip
[KP2] \, K. Kurdyka and W. Pawłucki; \emph{O-minimal version of Whitney's extension theorem}; Studia Math. {\bf 224} (2014), 81-96.

\medskip
[P1]  \, W. Pawłucki; \emph{A decomposition of a set definable in an o-minimal structure into perfectly situated sets}; Ann. Polon. Math. {\bf 79, no 2} (2002), 171-184.

\medskip
[P2] \, W. Pawłucki; \emph{Lipschitz cell decomposition in o-minimal structures.I}; Illinois J. Math. {\bf 52} (2008), 1045-1063.

\medskip
[PR] \, A.Parusiński, A. Rainer; \emph{Uniform extension of definable $C^{m,\omega}$-Whitney jets}; Pacific J. Math. {\bf 330 no. 2} (2024) 317-353.

\medskip
[T] \, A. Thamrongthanyalak; \emph{Whitney's extension theorem in o-minimal structures}; Ann. Polon. Math. {\bf 119 no. 1} (2017), 49-67.

\medskip
[W] \, H. Whitney; \emph{Analytic extension of differentiable functions defined in closed sets}; Trans. Amer. Math. Soc. {\bf 36, no. 1} (1934), 63-89.

\end{document}